\newtheorem{df}{Definition}[section]
\newtheorem{thm}[df]{Theorem}
\newtheorem{pro}[df]{Proposition}
\begin{document}
\setcounter{page}{1}

\title[Quasi-Solvable Lie Algbras]{Joint Spectrum for quasi-solvable\\ Lie algebras of operators}
\author{Enrico Boasso}

\begin{abstract}Given a complex Banach space $X$ and a joint spectrum for complex solvable finite dimensional Lie algebras of operators defined on $X$, we extend this joint
spectrum to quasi-solvable Lie algebras of operators, and we prove the main spectral properties of the extended
joint spectrum. We also show that this construction is uniquely determined by the original joint spectrum.\end{abstract}

\maketitle
\section{ Introduction}

\indent Given a Banach space $X$, Z. S\l odkowski and W. Zelazko 
studied in [9] the main spectral properties of joint spectra 
of commuting, either finite or infinite, families of operators defined on $X$. 
In addition, in order to prove the projection property for these joint 
spectra, they showed that if a joint spectrum is defined only for 
finite families of mutually commuting operators and  it has the 
projection property, then, by means of the notion of inverse limit, 
this joint spectrum can be uniquely extended to a joint spectrum 
defined on the family of all subsets of ${\mathcal L} (X)$ consisting 
of pairwise commuting operators. Moreover, 
this generalized joint spectrum also has the main spectral 
properties, i.e., it is a compact nonempty set and the 
projection property still holds.\par

\indent On the other hand, in the last years some joint spectra
for operators generating nilpotent and solvable Lie algebras
were introduced. For example, in [5] it was considered the first
non-commutative version of the Taylor joint spectrum, [10],
for nilpotent Lie algebras of operators. Working independently,
in [2] and in [3] we extended the Taylor, and the 
S\l odkowki joint spectra, [10] and [8], of finite 
commuting tuples of operators to complex 
solvable finite dimensional Lie algebras of 
operators. Moreover, in [1] it was introduced a new concept of 
spectrum for complex solvable finite dimensional Lie algebras 
of operators, which coincides with one 
of [5] and [2] in the case of a nilpotent Lie algebra, but in the 
solvable case differs, in general,  from the one of [2]. In addition, 
the joint spectrum introduced in [1] may be extended to some 
infinite dimensional Lie algebras of operators, in a similar way 
as Z. S\l odkowski and W. Zelazko did in [9] for the commutative case.
\par    

\indent In this article we consider another non commutative variant of 
the construction developed by Z. S\l odkowski and W. Zelazko in [9]. 
In fact, we consider a joint spectrum defined for complex solvable finite 
dimensional Lie algebras of operators, such as the Taylor and the S\l odkowski 
joint spectra, and we extend it to quasi-solvable Lie algebra of operators, 
see section 2. In addition, we prove the main spectral properties for 
our joint spectrum, and we show that this construction is 
uniquely determined by the original joint spectrum. On the other hand, 
this joint spectrum will be, in general, different from the one considered in [1].\par   

\indent The paper is organized as follows. In Section 2 we recall several 
definitions and results which we need for our work, and in Section 3 we 
prove our main result.\par
    
\section{Preliminaries}

\indent Let us begin with the definition of joint spectrum 
which we shall consider. \par 

\begin{df} Let $X$ be a Banach space. A joint spectrum is a 
function, $\sigma$, which assigns, to each complex solvable finite 
dimensional Lie algebra $L$ of operators defined on $X$, 
a compact nonempty subset $\sigma (L)$ of characters, such that if 
$H$ is a Lie ideal of $L$, and $\pi$ is the restriction map $\pi\colon 
L^*\to H^*$, then the projection property for ideals holds, i.e., $
\pi(\sigma (L))=\sigma (H)$.
\end{df} 

\indent The joint spectra that we are considering are the Taylor joint 
spectrum and the S\l odkowski joint spectra for complex
solvable finite dimensional Lie algebras of operators, see
[8], [10], [2], [3] and [6].\par

\indent We now recall the definition of a quasi-solvable Lie algebra.\par

\begin{df} A quasi-solvable Lie algebra ${\mathcal L}$ is a complex Lie algebra such that ${\mathcal L}=\sum_{\alpha\in I} I_{\alpha}$, where $I$ is an index set, and for each $\alpha\in I$, $I_{\alpha}$ is a complex solvable finite dimensional ideal of ${\mathcal L}$.\end{df} 

\indent In order to see the main poperties of the quasi-solvable Lie algebras and their behaviour under representations in Banach spaces, 
we refer to [7] and [11].\par

\indent Our main result concers the notion of inverse limit. We recall the most important facts related to this notion; for a complete exposition see [4].\par

\begin{df} An inverse system of sets and maps $\{{\mathcal X},\hbox{}\pi\}$, over a directed set $(M,<)$, is a function which attaches to each $\alpha\in M$ a set ${\mathcal X}_{\alpha}$, and to each pair $\alpha$ and $\beta$ such that $\alpha<\beta$ in $M$, a map $\pi_{\alpha}^{\beta}\colon {\mathcal X}_{\beta}\to {\mathcal X}_{\alpha}$ such that
$$
\pi_{\alpha}^{\alpha}=Id_M,\hskip2cm \pi_{\alpha}^{\beta}\circ\pi_{\beta}^{\gamma}=\pi_{\alpha}^{\gamma},
$$
where $\alpha$, $\beta$, $\gamma$ belong to $M$ and $\alpha <\beta<\gamma$.\end{df}  

\begin{df} Let $\{{\mathcal X},\pi\}$ be an inverse system of sets and maps over a directed set $(M,<)$. Then, the inverse limit ${\mathcal X}_{\infty}$ is the subset
of the product $\Pi_{\alpha\in M} X_{\alpha}$ consisting of those elements $x=(x_{\alpha})_{\alpha\in M}$, such that for each relation $\alpha<\beta$ in $M$,  $\pi_{\alpha}^{\beta}(x_{\beta})=x_{\alpha}$.
\end{df}
\indent If all the sets ${\mathcal X}_{\alpha}$ are topological space, 
then to ${\mathcal X}_{\infty}$ is assigned the topology as subspace of
$\Pi_{\alpha\in M} {\mathcal X}_{\alpha}$. Naturally, the projections 
$$
\pi_{\alpha}\colon {\mathcal X}_{\infty}\to {\mathcal X}_{\alpha},\hskip2cm \pi_{\alpha}(x)=x_{\alpha},
$$ 
are continuous maps. Moreover, if for each $\alpha\in I$, ${\mathcal X}_{\alpha}$ is a nonempty compact space, then ${\mathcal X}_{\infty}$ is also nonempty and compact, see [4, Chapter VIII, Section 3, Lemma 3.3,Theorem 3.6].\par

\section{The Main Result}

\indent In this section we extend the joint spectrum
which we have defined from complex solvable finite dimensional  Lie algebras of operators to quasi-solvable Lie algebras of operators. As we have said, in order to define this joint spectrum and to prove its main spectral properties, we
work with the notion of inverse limit as in [9] and in [1].
On the other hand, we first give a definition of the joint spectrum which depends on a particular presentation of the algebra ${\mathcal L}$, and then we show that this definition is the correct one. We proceed as follows.\par

\indent Let us consider $X$ a Banach space, ${\mathcal L}$ a complex 
quasi-solvable Lie subalgebra of ${\mathcal L}(X)$ and an index set $I$ 
such that ${\mathcal L}=\sum_{\alpha\in I} I_{\alpha}$, where, for 
each $\alpha\in I$, $I_{\alpha}$ is a complex solvable finite 
dimensional ideal of ${\mathcal L}$, and such that $I$, with the 
inclusion, is a directed set, i.e., if $I_{\alpha}$ and $I_{\beta}$ 
are solvable ideals of ${\mathcal L}$ such that $\alpha$ and $\beta$ 
belong to $I$, then there is a solvable ideal $I_{\gamma}$, 
$\gamma\in I$, such that $I_{\alpha}\cup I_{\beta}
\subseteq I_{\gamma}$. For example,
if $\mathbb I$ is the set of all complex solvable finite dimensional ideals of ${\mathcal L}$,
then $\mathbb I$ is a directed set. \par

\indent In addition, given $I$ is as above, let us consider the family of maps $\pi=\{\pi_{\alpha}^{\beta}\colon I_{\beta}^*\to I_{\alpha}^*\}$, where $\alpha$ and $\beta$ belong to $I$,
$\alpha <\beta$, and $\pi$  is the usual pojection, i.e., the restriction map.
Let us also consider the family of sets ${\mathcal X}=\{ \sigma(I_{\alpha})\}_{\alpha\in I}$. Then, by the projection property of the joint spectrum, we have that $\{{\mathcal X},\pi\}$ is an inverse system of topological spaces. Now, we may state our definition of
the joint spectrum for quasi-solvable Lie algebras.\par

\begin{df} Let $X$, ${\mathcal L}$, $I$, $( I_{\alpha})_{\alpha\in I}$ and $\{{\mathcal X},\pi\}$ be as above. The joint spectrum of the quasi-solvable algebra ${\mathcal L}$, relative to the presentation of ${\mathcal L}$ defined by $I$ and by $(I_{\alpha})_{\alpha\in I}$, is the inverse limit of the inverse system $\{{\mathcal X},\pi\}$, and it is denoted by 
$$
\sigma(\mathcal L, (I_{\alpha})_{\alpha\in I})={\mathcal X}_{\infty}.
$$
\end{df}

\indent We observe that this definition depends on the set $I$ and on a particular presentation of ${\mathcal L}$, however, Proposition 3.4 shows that the extended joint spectrum is independent of  the presentation of $\mathcal L$, and Theorem 3.7 that it is uniquely determined by its properties.\par

\indent On the other hand, by [4, Chapter VIII, Section 3, Theorem 3.6] we have that $\sigma(\mathcal L, (I_{\alpha})_{\alpha\in I})$ is a compact nonempty subset of
$\Pi_{\alpha\in I} \sigma(I_{\alpha})$.  Let us now study in more detail the properties of the introduced joint spectrum.\par

\begin{pro} Let $X$, $\mathcal L$, $I$, 
$(I_{\alpha})_{\alpha\in I}$ and $\{{\mathcal X},\pi\}$ 
be as above. Then, the joint spectrum 
$\sigma(\mathcal L,(I_{\alpha})_{\alpha\in I})$ 
may be identified with a subset of the characters of $\mathcal L$.\end{pro}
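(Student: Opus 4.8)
The plan is to construct an explicit map from $\mathcal{X}_\infty$ into the set of characters of $\mathcal{L}$ and to show it is injective. Concretely, an element of the inverse limit is a compatible family of functionals on the ideals $I_\alpha$, and such a family glues to a single functional on $\mathcal{L}$ precisely because $\mathcal{L}$ is the directed union of the $I_\alpha$. Dually, this amounts to the observation that $\mathcal{L}^*$ is the inverse limit of the $I_\alpha^*$ along the restriction maps $\pi_\alpha^\beta$, and that $\mathcal{X}_\infty$ is the sub-inverse-limit cut out by the subsets $\sigma(I_\alpha)$, all of whose members are characters.

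First I would record the elementary fact that $\mathcal{L}=\bigcup_{\alpha\in I}I_\alpha$. Since $I$ is directed, for any finite set $\alpha_1,\dots,\alpha_n\in I$ there is $\gamma\in I$ with $I_{\alpha_1}\cup\dots\cup I_{\alpha_n}\subseteq I_\gamma$, hence $I_{\alpha_1}+\dots+I_{\alpha_n}\subseteq I_\gamma$; therefore $\sum_{\alpha\in I}I_\alpha=\bigcup_{\alpha\in I}I_\alpha$, and the claim follows from $\mathcal{L}=\sum_{\alpha\in I}I_\alpha$. Next, given $x=(x_\alpha)_{\alpha\in I}\in\mathcal{X}_\infty$, I would define $\varphi_x\colon\mathcal{L}\to\mathbb{C}$ by $\varphi_x(v)=x_\alpha(v)$ for any $\alpha\in I$ with $v\in I_\alpha$. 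This is well defined: if $v\in I_\alpha\cap I_\beta$, choose $\gamma$ with $I_\alpha\cup I_\beta\subseteq I_\gamma$; the compatibility relations $\pi_\alpha^\gamma(x_\gamma)=x_\alpha$ and $\pi_\beta^\gamma(x_\gamma)=x_\beta$ give $x_\alpha(v)=x_\gamma(v)=x_\beta(v)$. Linearity of $\varphi_x$ follows by picking, for any two given vectors, a common ideal $I_\gamma$ containing both and invoking linearity of $x_\gamma$.

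It then remains to check that $\varphi_x$ is a character of $\mathcal{L}$, i.e.\ vanishes on $[\mathcal{L},\mathcal{L}]$. For $v,w\in\mathcal{L}$ choose $\gamma$ with $v,w\in I_\gamma$; then $[v,w]\in[I_\gamma,I_\gamma]$, and since $x_\gamma\in\sigma(I_\gamma)$ is a character of $I_\gamma$ we get $\varphi_x([v,w])=x_\gamma([v,w])=0$. As $[\mathcal{L},\mathcal{L}]$ is spanned by such brackets, $\varphi_x$ vanishes on it. Finally, $x\mapsto\varphi_x$ is injective because $x$ is recovered from $\varphi_x$ through $x_\alpha=\varphi_x|_{I_\alpha}$, which holds by construction; this exhibits the desired identification of $\sigma(\mathcal{L},(I_\alpha)_{\alpha\in I})=\mathcal{X}_\infty$ with the subset $\{\varphi\in\mathcal{L}^*:\varphi\text{ is a character and }\varphi|_{I_\alpha}\in\sigma(I_\alpha)\text{ for all }\alpha\in I\}$ of characters of $\mathcal{L}$.

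I do not expect a genuine obstacle in this argument; the two points that require a little care are the passage from $\mathcal{L}=\sum_{\alpha}I_\alpha$ to $\mathcal{L}=\bigcup_{\alpha}I_\alpha$, which is exactly where directedness of $I$ is used, and the verification that the glued functional annihilates all of $[\mathcal{L},\mathcal{L}]$ and not merely each individual $[I_\gamma,I_\gamma]$ — this again reduces, via directedness, to the finite-dimensional case handled by Definition 2.1.
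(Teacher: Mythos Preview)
Your argument is correct and follows essentially the same route as the paper: glue the compatible family $(x_\alpha)_{\alpha\in I}$ into a single functional on $\mathcal L$, check well-definedness on overlaps via directedness, verify it is a character, and note injectivity because each $x_\alpha$ is recovered by restriction. The only cosmetic difference is that you first reduce $\mathcal L=\sum_\alpha I_\alpha$ to $\mathcal L=\bigcup_\alpha I_\alpha$, which lets you avoid the paper's comparison of two sum presentations of a given element; you also spell out the vanishing on $[\mathcal L,\mathcal L]$ that the paper leaves as ``a similar argument,'' and your final identification of the image already contains the content of the paper's next proposition.
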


\begin{proof}

\indent Let $(f_{\alpha})_{\alpha\in I}$ belongs to $\sigma ( L, (I_{\alpha})_{\alpha\in I})$, and let us associate to this element the function $f$ defined by $f\mid I_{\alpha}=f_{\alpha}$. Let us see that
$f$ is a well defined character of $\mathcal L$.\par

\indent First of all let us consider $x\in I_{\alpha}\cap I_{\beta}$, where $\alpha$ and $\beta\in I$. Thus, since $I$ is a directed set, there is an
ideal $I_{\gamma}$, $\gamma\in I$, such that $I_{\alpha}\cup I_{\beta}\subseteq I_{\gamma}$. Then, since $\{{\mathcal X},\pi\}$ is an inverse system we have
that $f_{\alpha}(x)=f_{\gamma}(x)=f_{\beta}(x)$.\par 

\indent Now if $x\in {\mathcal L}$, let us present it as
$x=\sum_{j=1}^{j=n} x_{\alpha_j}=\sum_{k=1}^{k=m} x_{\alpha_k^{'}}$, where $ x_{\alpha_j}\in I_{\alpha_j}$, 
$ x_{\alpha_k^{'}}\in I_{\alpha_k^{'}}$, and $\alpha_j$ and $\alpha_k^{'}$ belong to $I$,
$1\le j\le n$, $1\le k\le m$. Since $I$ is a directed set, there is a finite dimensional solvable ideal of $\mathcal L$, $I_{\beta}$, $\beta\in I$, such that 
$ \cup_{j=1}^{j=n} I_{\alpha_j}\cup\cup_{k=1}^{k=m} I_{\alpha_k^{'}}\subseteq I_{\beta}$. Then
$$
\sum_{j=1}^{j=n}f_{\alpha_j}(x_{\alpha_j})=f_{\beta}(x)=\sum_{k=1}^{k=m}f_{\alpha_k{'}}(x_{\alpha_k^{'}} ).
$$
Thus, $f$ is a well defined map.\par

\indent By a similar argument it is easy to see that $f$ is a character of $\mathcal L$. Moreover, by construction, the above assigment is an injective identification.
\end{proof}

\begin{pro}Let $X$, $\mathcal L$, $I$, $(I_{\alpha})_{\alpha\in I}$ and $\{{\mathcal X},\pi\}$ be as above. Then
$$
\sigma (\mathcal L, (I_{\alpha})_{\alpha\in I})=\{f\colon  f\hbox{ is a character of }{\mathcal L} \hbox{, and for each } \alpha\in I\hbox{, }f\mid I_{\alpha}\in \sigma (I_{\alpha})\} . 
$$
\end{pro}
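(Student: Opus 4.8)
The plan is to prove the set equality by establishing both inclusions, using the explicit description of the inverse limit in Definition 2.5 together with the identification already set up in Proposition 3.3.

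First I would unwind the definitions. By Definition 2.5, an element of $\mathcal{X}_\infty = \sigma(\mathcal{L},(I_\alpha)_{\alpha\in I})$ is a family $(f_\alpha)_{\alpha\in I}$ with $f_\alpha \in \sigma(I_\alpha)$ for every $\alpha$, and with $\pi_\alpha^\beta(f_\beta) = f_\alpha$, i.e. $f_\beta\mid I_\alpha = f_\alpha$, whenever $\alpha < \beta$ (that is, $I_\alpha \subseteq I_\beta$). By Proposition 3.3 such a family is identified with the unique character $f$ of $\mathcal{L}$ satisfying $f\mid I_\alpha = f_\alpha$ for all $\alpha$, and this correspondence is a bijection onto its image. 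So I will freely pass between a point of the inverse limit and the character it represents.

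For the inclusion $\subseteq$: if $f$ corresponds to $(f_\alpha)\in\mathcal{X}_\infty$, then since $\mathcal{X}_\infty\subseteq\prod_{\alpha\in I}\sigma(I_\alpha)$ each coordinate $f_\alpha$ lies in $\sigma(I_\alpha)$; as $f\mid I_\alpha=f_\alpha$, this exhibits $f$ as a character of $\mathcal{L}$ whose restriction to every $I_\alpha$ lies in $\sigma(I_\alpha)$. For the inclusion $\supseteq$: let $f$ be a character of $\mathcal{L}$ with $f\mid I_\alpha\in\sigma(I_\alpha)$ for every $\alpha$, and set $f_\alpha := f\mid I_\alpha$ (which is a character of $I_\alpha$, since a character of $\mathcal{L}$ restricts to a character of any subalgebra). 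It then remains only to verify the compatibility relations of the inverse system: for $\alpha<\beta$ we have $I_\alpha\subseteq I_\beta$ and
$$\pi_\alpha^\beta(f_\beta)=f_\beta\mid I_\alpha=(f\mid I_\beta)\mid I_\alpha=f\mid I_\alpha=f_\alpha,$$
so $(f_\alpha)_{\alpha\in I}\in\mathcal{X}_\infty$, and by construction the character this family represents is $f$ itself. Hence $f\in\sigma(\mathcal{L},(I_\alpha)_{\alpha\in I})$.

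There is no real obstacle here: the statement is essentially a reformulation of the inverse-limit description made possible by Proposition 3.3. The only point deserving care is transitivity of restriction of characters — a restriction of a restriction is a restriction — which is exactly what guarantees that a family of the form $(f\mid I_\alpha)_{\alpha\in I}$ automatically satisfies the compatibility conditions of the inverse system; the rest is bookkeeping, provided one keeps the bijection $f\leftrightarrow(f\mid I_\alpha)_{\alpha\in I}$ of Proposition 3.3 firmly in mind.
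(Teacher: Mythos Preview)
Your proof is correct and follows essentially the same two-inclusion approach as the paper, using the identification of inverse-limit elements with characters for $\subseteq$ and checking the compatibility conditions directly for $\supseteq$. One labeling slip: the identification result you invoke is Proposition~3.2, not Proposition~3.3 (the latter is the statement you are proving); and where you verify compatibility via transitivity of restriction, the paper phrases this as an appeal to the projection property, but the content is the same.
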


\begin{proof}

\indent By the definition of $\sigma(\mathcal L, (I_{\alpha})_{\alpha\in I})$ and by Proposition 3.2, we have that the joint spectrum is contained
in the right hand set of the identity.\par

\indent On the other hand, if $f$ is a character of $\mathcal L$ such that
$f\mid I_{\alpha}\in \sigma (I_{\alpha})$, then by the projection property of the joint spectrum, $(f_{\alpha})_{\alpha\in I}$ belongs to
$\sigma (\mathcal L, (I_{\alpha})_{\alpha\in I})$. However, by Proposition 3.2, $(f_{\alpha})_{\alpha\in I}$  is identified with $f$.
\end{proof} 

\indent As a consequence of the Proposition 3.3 we have that the
joint spectrum is independent of a particular presentation of the
quasi-solvable Lie algebra $\mathcal L$.\par

\begin{pro}Let $X$ and $\mathcal L$ be as above, and $I_j$, $j=1$, $2$, two directed index sets such that
$\mathcal L=\sum_{\alpha_1\in I_1} I_{\alpha_1}= \sum_{\alpha_2\in I_2} I_{\alpha_2}$, where $I_{\alpha_j}$, $j=1$, $2$, are complex solvable finite dimensional ideals
of $\mathcal L$. Then
$$
\sigma(\mathcal L,(I_{\alpha_1})_{\alpha_1\in I_1})=\sigma(\mathcal L,(I_{\alpha_2})_{\alpha_2\in I_2}).
$$
\end{pro}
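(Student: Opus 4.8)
The plan is to exploit Proposition 3.3, which gives an \emph{intrinsic} description of each joint spectrum as a set of characters of $\mathcal L$, and to show that the two descriptions coming from the presentations $I_1$ and $I_2$ coincide. By Proposition 3.3,
$$
\sigma(\mathcal L,(I_{\alpha_1})_{\alpha_1\in I_1})=\{f\colon f\hbox{ is a character of }\mathcal L\hbox{, and }f\mid I_{\alpha_1}\in\sigma(I_{\alpha_1})\hbox{ for all }\alpha_1\in I_1\},
$$
and similarly for $I_2$. So it suffices to prove that a character $f$ of $\mathcal L$ satisfies $f\mid I_{\alpha_1}\in\sigma(I_{\alpha_1})$ for every $\alpha_1\in I_1$ if and only if it satisfies $f\mid I_{\alpha_2}\in\sigma(I_{\alpha_2})$ for every $\alpha_2\in I_2$.

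First I would merge the two presentations: since every $I_{\alpha_1}$ and every $I_{\alpha_2}$ is a complex solvable finite dimensional ideal of $\mathcal L$, the collection $\mathbb I$ of \emph{all} complex solvable finite dimensional ideals of $\mathcal L$ is a directed index set (as noted after Definition 2.5, using that a sum of two solvable ideals is a solvable ideal), and $\mathcal L=\sum_{J\in\mathbb I}J$. Thus it is enough to show, for a single presentation $(I_{\alpha_1})_{\alpha_1\in I_1}$, that the characterization via $I_1$ agrees with the characterization via $\mathbb I$; the same argument applied to $I_2$ then finishes the proof by transitivity. One containment is immediate: $I_1\subseteq\mathbb I$, so if $f\mid J\in\sigma(J)$ for all $J\in\mathbb I$, then in particular $f\mid I_{\alpha_1}\in\sigma(I_{\alpha_1})$ for all $\alpha_1\in I_1$.

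For the reverse containment, suppose $f$ is a character of $\mathcal L$ with $f\mid I_{\alpha_1}\in\sigma(I_{\alpha_1})$ for all $\alpha_1\in I_1$, and let $J\in\mathbb I$ be arbitrary; I must show $f\mid J\in\sigma(J)$. Since $J$ is finite dimensional and $\mathcal L=\sum_{\alpha_1\in I_1}I_{\alpha_1}$, a basis of $J$ can be written using finitely many of the $I_{\alpha_1}$; as $I_1$ is directed there is a single $\beta_1\in I_1$ with $J\subseteq I_{\beta_1}$. Now $J$ is a solvable ideal of $\mathcal L$, hence a Lie ideal of $I_{\beta_1}$, so the projection property for ideals (Definition 2.1) applied to the inclusion $J\hookrightarrow I_{\beta_1}$ and the restriction map $\rho\colon I_{\beta_1}^*\to J^*$ gives $\rho(\sigma(I_{\beta_1}))=\sigma(J)$. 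Since $f\mid I_{\beta_1}\in\sigma(I_{\beta_1})$ by hypothesis and $\rho(f\mid I_{\beta_1})=f\mid J$, we conclude $f\mid J\in\sigma(J)$. This proves that the $I_1$-characterization equals the $\mathbb I$-characterization, and the analogous computation for $I_2$ yields
$$
\sigma(\mathcal L,(I_{\alpha_1})_{\alpha_1\in I_1})=\sigma(\mathcal L,(I_{J})_{J\in\mathbb I})=\sigma(\mathcal L,(I_{\alpha_2})_{\alpha_2\in I_2}).
$$
The only point requiring care — the ``main obstacle'', though a mild one — is verifying that $J$ embeds in a single $I_{\beta_1}$: this rests on $J$ being finite dimensional together with the directedness of $I_1$, and on checking that $J$, being an ideal of $\mathcal L$ contained in $I_{\beta_1}$, is indeed an ideal of $I_{\beta_1}$ so that the projection property is applicable.
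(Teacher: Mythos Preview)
Your proof is correct and follows essentially the same route as the paper: reduce to comparing an arbitrary presentation $(I_{\alpha})_{\alpha\in I}$ with the canonical presentation by the set $\mathbb I$ of all complex solvable finite dimensional ideals, use Proposition~3.3 for the intrinsic characterization, and for the nontrivial inclusion embed an arbitrary $J\in\mathbb I$ into some $I_{\beta}$ via finite dimensionality and directedness, then apply the projection property for the ideal $J\subseteq I_{\beta}$. Your write-up is in fact slightly more careful than the paper's in spelling out why such a $\beta$ exists and why $J$ is an ideal of $I_{\beta}$.
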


\begin{proof}

\indent It is enough to see that if $\mathbb I$ the set of all complex solvable finite dimensional ideals of $\mathcal L$,
then $
\sigma(\mathcal L,(I_{\alpha})_{\alpha\in I})=\sigma(\mathcal L,(I_{\alpha})_{\alpha\in \mathbb I})$,
where $I$ is a directed index set and $(I_{\alpha})_{\alpha\in I}$ is a particular presentation of $\mathcal L$.\par

\indent First of all, by Proposition 3.3 it is clear that $\sigma(\mathcal L,(I_{\alpha})_{\alpha\in\mathbb I})\subseteq\sigma(\mathcal L,(I_{\alpha})_{\alpha\in  I})$.\par

\indent On the other hand, given $f\in\sigma(\mathcal L,(I_{\alpha})_{\alpha\in  I})$, let us consider $I_{\alpha_0}$ 
an arbitrary complex solvable finite dimensional ideal of $\mathcal L$. Since $\mathcal L=\sum_{\alpha\in I} I_{\alpha}$, and $I$,
with the inclusion, is a directed set, there is $\beta\in I$ such
that $I_{\alpha_0}\subseteq I_{\beta}$. Now, since 
$f\in\sigma(\mathcal L,(I_{\alpha})_{\alpha\in  I})$, by Proposition 3.3 we have that $f\mid I_{\beta}\in\sigma(I_{\beta}) $. However, by the projection property of the joint spectrum, $\sigma(I_{\beta})\mid I_{\alpha_0}=\sigma (I_{\alpha_0})$.
Then, $f\mid I_{\alpha_0}\in\sigma (I_{\alpha_0})$ and
$f\in\sigma (\mathcal L, (I_{\alpha})_{\alpha\in \mathbb I})$.
\end{proof} 
 
\indent In the following propositions we prove two of the most
important properties of the joint spectrum for quasi-solvable Lie algebras of operators.\par 

\begin{pro}Let $X$, $\mathcal L$, $I$, $(I_{\alpha})_{\alpha\in I}$ and $\{{\mathcal X},\pi\}$ be as above. Then, if $L$ is a complex solvable finite dimensional ideal of $\mathcal L$,
$$
\sigma ( L, (L\cap I_{\alpha})_{\alpha\in I})=\sigma (L).
$$
\end{pro}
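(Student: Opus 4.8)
The plan is to reduce the statement to Proposition 3.3 together with one elementary consequence of the finite dimensionality of $L$. First I would verify that $(L\cap I_\alpha)_{\alpha\in I}$ is a presentation of $L$ of the type required in Section 3: each $L\cap I_\alpha$ is a Lie ideal of $L$ (since $[L,L\cap I_\alpha]\subseteq [L,L]\cap[\mathcal L,I_\alpha]\subseteq L\cap I_\alpha$), it is solvable and finite dimensional as a subalgebra of $L$, and the assignment $\alpha\mapsto L\cap I_\alpha$ is order preserving, so the same index set $I$ is directed for this family. Moreover every $x\in L\subseteq\mathcal L=\sum_{\alpha\in I} I_\alpha$ is a finite sum of elements of ideals $I_{\alpha_1},\dots,I_{\alpha_n}$, and by directedness of $I$ these are contained in a single $I_\beta$, so $x\in L\cap I_\beta$; hence $L=\sum_{\alpha\in I}(L\cap I_\alpha)$. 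Thus $\sigma(L,(L\cap I_\alpha)_{\alpha\in I})$ is defined, and Proposition 3.3, applied to $L$ with this presentation, identifies it with
$$
\{\,g : g\hbox{ is a character of }L\hbox{, and }g\mid L\cap I_\alpha\in\sigma(L\cap I_\alpha)\hbox{ for every }\alpha\in I\,\}.
$$

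Next I would settle the inclusion $\sigma(L)\subseteq\sigma(L,(L\cap I_\alpha)_{\alpha\in I})$. If $g\in\sigma(L)$, then $g$ is a character of $L$, and for each $\alpha\in I$ the set $L\cap I_\alpha$ is a Lie ideal of $L$, so the projection property for ideals of Definition 2.1 gives $g\mid L\cap I_\alpha\in\sigma(L\cap I_\alpha)$. By the description above, $g\in\sigma(L,(L\cap I_\alpha)_{\alpha\in I})$.

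For the opposite inclusion the one extra ingredient — and the only place the hypothesis $\dim L<\infty$ is used — is that $L$ is contained in one of the ideals $I_\beta$: choosing a finite basis of $L$, expressing each basis vector as a finite sum of elements of various $I_\alpha$'s, and applying directedness of $I$ finitely often produces $\beta\in I$ with $L\subseteq I_\beta$, i.e.\ $L=L\cap I_\beta$. Then, for $g$ in the set displayed above, taking $\alpha=\beta$ yields $g=g\mid L\cap I_\beta\in\sigma(L\cap I_\beta)=\sigma(L)$, which finishes the proof. I expect this finite-dimensional reduction $L=L\cap I_\beta$ to be the only real point: once it is available, the inverse limit defining $\sigma(L,(L\cap I_\alpha)_{\alpha\in I})$ collapses onto $\sigma(L)$, and everything else is a formal application of Proposition 3.3 and the projection property.
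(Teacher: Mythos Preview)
Your argument is correct and follows essentially the same route as the paper: you verify that $L=\sum_{\alpha\in I}(L\cap I_\alpha)$ is a valid presentation, invoke Proposition~3.3, use the projection property for one inclusion, and exploit the finite dimensionality of $L$ to find $\beta\in I$ with $L=L\cap I_\beta$ for the other. The paper's proof is the same in structure and content, only slightly terser in checking that the $L\cap I_\alpha$ are ideals of $L$.
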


\begin{proof}

\indent First of all we observe that $L=\sum_{\alpha\in I} 
L\cap I_{\alpha}$.\par 

\indent Indeed, it is clear that $\sum_{\alpha\in I} 
L\cap I_{\alpha}\subseteq L$. On the other hand, if $x\in L\subseteq
{\mathcal L}$, there are $\alpha_i\in I$ and $x_i\in I_{\alpha_i}$,  
$i=1,\ldots ,n$, such that $x=\sum_{i=1}^n x_i$. However, 
since  $I$ is a directed set with the inclusion, there is $\alpha\in I$ 
such that $\cup_{i=1}^n I_{\alpha_i}\subseteq I_{\alpha}$. Thus,
$x\in L\cap I_{\alpha}\subseteq \sum_{\alpha\in I} 
L\cap I_{\alpha}$.\par

\indent Now, since $L=\sum_{\alpha\in I} 
L\cap I_{\alpha}$, 
we may construct the inverse limit set.\par

\indent In addition, since $L$ is a finite dimensional ideal of $\mathcal L$, and $I$ is a directed set, there is a solvable finite
dimensional ideal of $\mathcal L$, $I_{\beta}$, $\beta\in I$, such that $L=L\cap I_{\beta} $. Now, if $f$ is a character of $L$, by construction of $\sigma ( L, (L\cap I_{\alpha})_{\alpha\in I})$ and of Proposition 3.3 we have that
$f\mid L\cap I_{\beta}\in \sigma (L\cap I_{\beta})$. Since $L=L\cap I_{\beta} $, $f\mid L\cap L_{\beta}=f$, then $\sigma ( L, (L\cap I_{\alpha})_{\alpha\in I})\subseteq \sigma(L)$.\par

\indent On the other hand, if $f\in \sigma (L)$, by the projection property for ideals of the joint spectrum, $f\mid{ L\cap I_{\alpha}}\in \sigma (L\cap I_{\alpha})$, thus, as $f$ is a character of $L$, by Proposition 3.3 we have 
the reverse contention.
\end{proof} 
  
\begin{pro}Let $X$, $\mathcal L$, $I$, 
$(I_{\alpha})_{\alpha\in I}$ and $\{{\mathcal X},\pi\}$ be as above. 
If $\mathcal H$ is an
ideal of $\mathcal L$. Then, the joint spectrum
has the projection property for ideals, i.e.,
$$
\sigma (\mathcal L, (I_{\alpha})_{\alpha\in I})\mid {\mathcal H} =\sigma (\mathcal H,  (\mathcal H\cap I_{\alpha})_{\alpha\in I}).
$$
\end{pro}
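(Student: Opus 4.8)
The plan is to turn the identity, by means of the characterization in Proposition 3.3, into a purely character‑theoretic statement, and then to prove the non‑trivial (surjectivity) inclusion by an inverse‑limit compactness argument. First I would check that the right–hand side is meaningful: exactly as in the first part of the proof of Proposition 3.5, the fact that $I$ is directed gives $\mathcal H=\sum_{\alpha\in I}\mathcal H\cap I_\alpha$, and for each $\alpha$ the subspace $\mathcal H\cap I_\alpha$ is a complex solvable finite dimensional ideal both of $\mathcal H$ and of $I_\alpha$ (it is finite dimensional and solvable because $I_\alpha$ is, and it is an ideal because $\mathcal H$ and $I_\alpha$ are ideals of $\mathcal L$), while $(\mathcal H\cap I_\alpha)_{\alpha\in I}$ is again directed by inclusion. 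Hence $\sigma(\mathcal H,(\mathcal H\cap I_\alpha)_{\alpha\in I})$ is defined, and by Proposition 3.3 it coincides with the set of characters $g$ of $\mathcal H$ such that $g\mid \mathcal H\cap I_\alpha\in\sigma(\mathcal H\cap I_\alpha)$ for every $\alpha\in I$.

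For the inclusion $\sigma(\mathcal L,(I_\alpha)_{\alpha\in I})\mid\mathcal H\subseteq\sigma(\mathcal H,(\mathcal H\cap I_\alpha)_{\alpha\in I})$ I would take $f\in\sigma(\mathcal L,(I_\alpha)_{\alpha\in I})$; by Proposition 3.3, $f$ is a character of $\mathcal L$ with $f\mid I_\alpha\in\sigma(I_\alpha)$ for all $\alpha$, so $f\mid\mathcal H$ is a character of $\mathcal H$. Applying the projection property for ideals of the \emph{original} joint spectrum to the ideal $\mathcal H\cap I_\alpha$ of $I_\alpha$ gives $\sigma(I_\alpha)\mid \mathcal H\cap I_\alpha=\sigma(\mathcal H\cap I_\alpha)$, whence $(f\mid\mathcal H)\mid \mathcal H\cap I_\alpha=f\mid \mathcal H\cap I_\alpha\in\sigma(\mathcal H\cap I_\alpha)$ for every $\alpha$; by Proposition 3.3 applied to $\mathcal H$ this says exactly that $f\mid\mathcal H\in\sigma(\mathcal H,(\mathcal H\cap I_\alpha)_{\alpha\in I})$.

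The reverse inclusion is the heart of the matter. Starting from a character $g$ of $\mathcal H$ with $g\mid \mathcal H\cap I_\alpha\in\sigma(\mathcal H\cap I_\alpha)$ for every $\alpha$, I must produce an element of $\sigma(\mathcal L,(I_\alpha)_{\alpha\in I})$ restricting to $g$; since an arbitrary pointwise choice of preimage $f_\alpha\in\sigma(I_\alpha)$ of $g\mid \mathcal H\cap I_\alpha$ need not be compatible with the bonding maps $\pi_\alpha^\beta$, I would instead set
$$
S_\alpha=\{\,h\in\sigma(I_\alpha)\colon h\mid \mathcal H\cap I_\alpha=g\mid \mathcal H\cap I_\alpha\,\}.
$$
Each $S_\alpha$ is nonempty, by the projection property $\sigma(I_\alpha)\mid \mathcal H\cap I_\alpha=\sigma(\mathcal H\cap I_\alpha)$, and compact, being the intersection of the compact set $\sigma(I_\alpha)$ with the preimage of a point under the continuous restriction map; and, using the projection property for ideals of the original joint spectrum once more together with $\mathcal H\cap I_\alpha\subseteq\mathcal H\cap I_\beta$, one checks that $\pi_\alpha^\beta(S_\beta)\subseteq S_\alpha$ whenever $\alpha<\beta$. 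Thus $\{S_\alpha,\pi\}$ is an inverse subsystem of $\{\mathcal X,\pi\}$ consisting of nonempty compact spaces, so by [4, Chapter VIII, Section 3, Theorem 3.6] its inverse limit is nonempty. Choosing $(f_\alpha)_{\alpha\in I}$ in it, this family lies in $\sigma(\mathcal L,(I_\alpha)_{\alpha\in I})$, hence by Proposition 3.2 is identified with a character $f$ of $\mathcal L$ with $f\mid I_\alpha=f_\alpha$; since $f\mid \mathcal H\cap I_\alpha=g\mid \mathcal H\cap I_\alpha$ for all $\alpha$ and $\mathcal H=\sum_{\alpha\in I}\mathcal H\cap I_\alpha$, we conclude $f\mid\mathcal H=g$, so $g\in\sigma(\mathcal L,(I_\alpha)_{\alpha\in I})\mid\mathcal H$. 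The step requiring the most care is precisely this last direction — verifying that the $S_\alpha$ genuinely form an inverse system of nonempty compacta, i.e. that the restriction maps preserve both membership in the spectra and the prescribed boundary values; the inclusion $\subseteq$ and the reassembly of $f$ from the $f_\alpha$ are then routine.
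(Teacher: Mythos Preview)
Your argument is correct. The setup and the easy inclusion $\subseteq$ match the paper's (the paper phrases it via a map of inverse systems $P_\infty$, but the content is the same use of Proposition~3.3 together with $\sigma(I_\alpha)\mid \mathcal H\cap I_\alpha=\sigma(\mathcal H\cap I_\alpha)$). The genuine difference is in the surjectivity step. The paper argues by contradiction with the inverse-limit \emph{topology}: it views restriction as the induced map $P_\infty$ between the two inverse limits, notes that $P_\infty(\sigma(\mathcal L,\ldots))$ is compact, and if some $f$ were missed one could separate it by a basic open set $U=\pi_\alpha'^{-1}(V)$; then the commutation $\pi'_\alpha\circ P_\infty=P_\alpha\circ\pi_\alpha$ and the surjectivity of $P_\alpha$ and $\pi_\alpha$ force $P_\infty^{-1}(U)\neq\emptyset$, a contradiction. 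You instead give a direct, constructive lift: for a fixed $g$ you form the fiber system $\{S_\alpha\}$ and invoke the nonemptiness of inverse limits of compact nonempty spaces to produce a compatible family restricting to $g$. Your route is more self-contained (it avoids the base-of-topology lemma and the map-of-inverse-systems formalism from \cite{}Eilenberg--Steenrod), while the paper's route, once that machinery is in place, yields surjectivity of $P_\infty$ in one stroke without tracking fibers. Both approaches rely on the same underlying compactness principle; yours just applies it to a subsystem rather than to the ambient one.
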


\begin{proof}

\indent First of all, let us consider the directed set $I$. Then, an easy calculation shows that $\mathcal H=\sum_{\alpha\in I}H\cap I_{\alpha}$.
Thus, we may consider the set $\sigma (\mathcal H, (\mathcal H\cap I_{\alpha})_{\alpha\in I})$.\par

\indent Now we consider the inverse systems $\{{\mathcal X},\pi\}$ and $\{{\mathcal X}^{'},\pi^{'}\}$ defined by ${\mathcal X}_{\alpha}= \sigma (I_{\alpha})$,
${\mathcal X}_{\alpha}^{'}= \sigma (\mathcal H\cap I_{\alpha})$, and  $\pi$
and $\pi^{'}$ are the families of projection maps defined between the corresponding spectral sets. Then, if we consider the map $Id\colon I\to I$ and  for each $\alpha\in I$ we also consider $P_{\alpha}\colon \sigma (I_{\alpha})\to \sigma (\mathcal H\cap I_{\alpha})$, the canonical restriction map, then, it is easy to verify that the identity map of $I$
and the family $(P_{\alpha})_{\alpha\in I}$ is a map of inverse systems, see [4, Chapter VIII, Section 2, Definition 2.3]. Thus, we have a well defined and continuos map $P_{\infty}\colon \sigma (\mathcal L, (I_{\alpha})_{\alpha\in I})\to \sigma (\mathcal H,  (\mathcal H\cap I_{\alpha})_{\alpha\in I})$, see [4, Chapter VIII, Section 3, Definition 3.10, Lemma 3.11, Theorem 3.13]. However, by the identification of Proposition 3.3 we have that
$$
\sigma (\mathcal L,  (I_{\alpha})_{\alpha\in I})\mid{\mathcal H} =P_{\infty}
(\sigma (\mathcal L,  (I_{\alpha})_{\alpha\in I}))\subseteq
\sigma (\mathcal H,  (\mathcal H\cap I_{\alpha})_{\alpha\in I}).
$$

\indent On the other hand, let us suppose that there is $f\in \sigma (\mathcal H,  (\mathcal H \cap I_{\alpha})_{\alpha\in I})\setminus P_{\infty}
(\sigma (\mathcal L, (I_{\alpha})_{\alpha\in I})$. Since $\sigma (\mathcal L, (I_{\alpha})_{\alpha\in I})$ is a compact set and $P_{\infty}$ is a continous map, there is an open set $U$, which may be chosen in the base of the topology of $\sigma (\mathcal H,  (\mathcal H\cap I_{\alpha})_{\alpha\in I})$, such that $f\in U$ and that $U\cap P_{\infty}
(\sigma (\mathcal L, (I_{\alpha})_{\alpha\in I}))$ is the empty set.\par

\indent In addition, by [4, Chapter VIII, Section 3, Definition 3.1, Lemma 3.3, Corollary 3.9] we have two well defined families of surjective and continuous maps, $(\pi_{\alpha})_{\alpha\in I_{\alpha}}$ and $(\pi_{\alpha}^{'})_{\alpha\in I_{\alpha}}$, which satisfies
$$
\pi_{\alpha}\colon \sigma (\mathcal L, (I_{\alpha})_{\alpha\in I})\to \sigma (I_{\alpha}),
$$
$$
 \pi_{\alpha}^{'}\colon \sigma (\mathcal H,  (\mathcal H\cap I_{\alpha})_{\alpha\in I})\to \sigma (\mathcal H\cap I_{\alpha}).
$$
Moreover, it is easy to see that $\pi^{'}_{\alpha}\circ P_{\infty}=
P_{\alpha}  \circ \pi_{\alpha}$, [4, Chapter VIII, Section 3, Lemma 3.11].\par
 
\indent Now, by [4, Chapter VIII, Section 3, Lemma 3.12] we know that there is an $\alpha\in I$
and $V$ in the topology of $\sigma (\mathcal H\cap I_{\alpha} )$ such that
$U=\pi^{'-1}_{\alpha}(V)$. Since $P_{\alpha}$ and $\pi_{\alpha}$
are surjective and continous maps, $(P_{\alpha}  \circ \pi_{\alpha})^{-1}(V)$ is an open nonempty set. Then, $(\pi^{'}_{\alpha}\circ P_{\infty})^{-1}(V)= (P_{\infty})^{-1} (U)$ is an open nonempty set,
which is impossible for $U\cap P_{\infty}(\sigma (\mathcal L,  (I_{\alpha})_{\alpha\in I}))$ is the empty set.
\end{proof}   
 
\indent We now state our main result.\par

\begin{thm} Let $X$ be a complex Banach space and 
$\sigma (.)$ a joint spectrum for complex solvable finite dimensional 
Lie algebras of operators defined on $X$. Then, for each complex quasi-solvable 
Lie subalgebra $\mathcal L$ of ${\mathcal L}(X)$, 
there is a uniquely well defined map, also denoted by $\sigma(.)$, such that the following conditions are fullfilled.

{\rm (i)} $\sigma(\mathcal L)$ is a subset of characters of $\mathcal L$ and a 
compact nonempty subset of $\Pi_{\alpha\in \mathbb I}
\sigma (I_{\alpha})$, where $\mathbb I$ denotes the set of all
complex solvable finite dimensional ideals of $\mathcal L$, 

{\rm (ii)} if $\mathcal H$ is a complex solvable finite dimensional ideal of $\mathcal L$,
then $\sigma (\mathcal H)$ coincides with the joint spectrum of $\mathcal H$ 
defined in the finite dimensional case,

{\rm (iii)} if $\mathcal M$ is a subalgebra of $\mathcal L$, and $\mathcal H$ is a Lie
ideal of $\mathcal M$, then for the joint spectrum the projection property for ideals holds., i.e., 
$$
\sigma (\mathcal M)\mid \mathcal H=\sigma (\mathcal H).
$$    

\end{thm}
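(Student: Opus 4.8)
The plan is to define $\sigma(\mathcal L)$ via the canonical presentation and then verify (i)–(iii) using the propositions already established, keeping uniqueness for last. Concretely, for a complex quasi-solvable Lie subalgebra $\mathcal L$ of $\mathcal L(X)$, let $\mathbb I$ be the set of all complex solvable finite dimensional ideals of $\mathcal L$, which is directed under inclusion and satisfies $\mathcal L=\sum_{\alpha\in\mathbb I}I_\alpha$ (every element of $\mathcal L$ lies in a finite-dimensional solvable ideal, since $\mathcal L$ is a sum of such ideals and $\mathbb I$ is directed). We then set $\sigma(\mathcal L):=\sigma(\mathcal L,(I_\alpha)_{\alpha\in\mathbb I})$, the inverse limit of Definition 3.1. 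By Proposition 3.4 this does not depend on the choice of presentation, so the definition is intrinsic; this is exactly what makes the later uniqueness argument possible.

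Next I would check the three properties. For (i): Proposition 3.2 identifies $\sigma(\mathcal L)$ with a set of characters of $\mathcal L$, and the remark after Definition 3.1 (citing [4, Ch.~VIII, §3, Thm.~3.6]) gives that the inverse limit is a compact nonempty subset of $\Pi_{\alpha\in\mathbb I}\sigma(I_\alpha)$. For (ii): if $\mathcal H$ is itself a complex solvable finite dimensional ideal of $\mathcal L$, then $\mathcal H\in\mathbb I$, and applying Proposition 3.5 with $L=\mathcal H$ (or directly: there is $\beta\in\mathbb I$ with $\mathcal H=\mathcal H\cap I_\beta$, namely $\beta$ the index of $\mathcal H$ itself) yields $\sigma(\mathcal H,(\mathcal H\cap I_\alpha)_{\alpha\in\mathbb I})=\sigma(\mathcal H)$ in the finite-dimensional sense. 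For (iii): given a subalgebra $\mathcal M$ of $\mathcal L$ with a Lie ideal $\mathcal H$, one must first note that $\mathcal M$ is again quasi-solvable, since $\mathcal M=\sum_{\alpha\in\mathbb I}\mathcal M\cap I_\alpha$ and each $\mathcal M\cap I_\alpha$ is a solvable finite dimensional ideal of $\mathcal M$ (here one uses that $I_\alpha$ is an ideal of $\mathcal L$, hence $\mathcal M\cap I_\alpha$ is an ideal of $\mathcal M$); so $\sigma(\mathcal M)$ is defined, and by Proposition 3.4 it equals $\sigma(\mathcal M,(\mathcal M\cap I_\alpha)_{\alpha\in\mathbb I})$. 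Then Proposition 3.6, applied to the quasi-solvable algebra $\mathcal M$ with its presentation $(\mathcal M\cap I_\alpha)_{\alpha}$ and the ideal $\mathcal H$, gives $\sigma(\mathcal M)\mid\mathcal H=\sigma(\mathcal H,(\mathcal H\cap(\mathcal M\cap I_\alpha))_{\alpha})=\sigma(\mathcal H)$, the last equality again by Proposition 3.4 since $\mathcal H\cap\mathcal M\cap I_\alpha=\mathcal H\cap I_\alpha$ ranges over a presentation of $\mathcal H$.

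For uniqueness, suppose $\tau(.)$ is another assignment on quasi-solvable subalgebras of $\mathcal L(X)$ satisfying (i)–(iii). Fix a quasi-solvable $\mathcal L$ and let $f$ be a character of $\mathcal L$. By (ii), for every $\alpha\in\mathbb I$ the solvable finite dimensional ideal $I_\alpha$ has $\tau(I_\alpha)=\sigma(I_\alpha)$, the original joint spectrum. By (iii) applied with $\mathcal M=\mathcal L$ and $\mathcal H=I_\alpha$, we get $\tau(\mathcal L)\mid I_\alpha=\tau(I_\alpha)=\sigma(I_\alpha)$ for all $\alpha$; hence, using the character identification from (i), every $g\in\tau(\mathcal L)$ satisfies $g\mid I_\alpha\in\sigma(I_\alpha)$ for all $\alpha$, so by Proposition 3.3, $\tau(\mathcal L)\subseteq\sigma(\mathcal L)$. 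Conversely, given $f\in\sigma(\mathcal L)$, by Proposition 3.3 we have $f\mid I_\alpha\in\sigma(I_\alpha)=\tau(I_\alpha)=\tau(\mathcal L)\mid I_\alpha$ for each $\alpha$; one then assembles these compatible restrictions and uses that $\tau(\mathcal L)$, being a compact subset of $\Pi_{\alpha}\tau(I_\alpha)=\Pi_{\alpha}\sigma(I_\alpha)$ that projects onto each $\sigma(I_\alpha)$ compatibly, must in fact contain the full inverse limit — equivalently, apply Proposition 3.3 to $\tau$ in place of $\sigma$ to conclude $f\in\tau(\mathcal L)$. Thus $\tau(\mathcal L)=\sigma(\mathcal L)$.

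The main obstacle I anticipate is the reverse inclusion in the uniqueness part: knowing only that $\tau(\mathcal L)$ projects onto each $\sigma(I_\alpha)$ with compatible projections does not a priori force $\tau(\mathcal L)$ to be all of the inverse limit, unless one can run the argument of Proposition 3.3 for $\tau$. So the clean way is to observe that properties (i)–(iii) are precisely the hypotheses needed to rerun Propositions 3.2–3.4 with $\tau$ replacing $\sigma$, giving $\tau(\mathcal L)=\{f: f$ a character of $\mathcal L$, $f\mid I_\alpha\in\sigma(I_\alpha)$ for all $\alpha\in\mathbb I\}=\sigma(\mathcal L)$; I would make this explicit rather than gloss over it. A minor secondary point to state carefully is that $\mathcal M$ and $\mathcal H$ in (iii) are themselves quasi-solvable so that $\tau$ and $\sigma$ are even defined on them, and that $\{\mathcal M\cap I_\alpha\}$ and $\{\mathcal H\cap I_\alpha\}$ are legitimate presentations so Proposition 3.4 applies.
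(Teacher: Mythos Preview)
Your existence argument is correct and matches the paper's: define $\sigma(\mathcal M)=\sigma(\mathcal M,(\mathcal M\cap I_\alpha)_{\alpha\in\mathbb I})$ and read off (i)--(iii) from Propositions~3.2--3.6. The inclusion $\tau(\mathcal L)\subseteq\sigma(\mathcal L)$ in the uniqueness part is also fine and is exactly what the paper does.

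The gap is in the reverse inclusion. Your proposed fix, ``rerun Propositions~3.2--3.4 with $\tau$ in place of $\sigma$'', does not go through: the backward inclusion in Proposition~3.3 is proved by observing that a compatible family $(f_\alpha)$ \emph{is by definition} an element of the inverse limit. For an abstract $\tau$ satisfying only (i)--(iii) you have no such description of $\tau(\mathcal L)$, so that step has no content. What does work is precisely your first instinct---that a compact subset of the product which surjects onto every $\sigma(I_\alpha)$ must contain the whole inverse limit---but this requires an argument, and it is the one the paper gives. Namely: if $f\in\sigma(\mathcal L)\setminus\tau(\mathcal L)$, compactness of $\tau(\mathcal L)$ in $\Pi_{\alpha}\sigma(I_\alpha)$ yields a basic open neighbourhood $U=\pi_\alpha^{-1}(V)$ of $f$ disjoint from $\tau(\mathcal L)$ (directedness of $\mathbb I$ is what makes single-index cylinders a base). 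By (ii) and (iii), $\tau(\mathcal L)\mid I_\alpha=\sigma(I_\alpha)$, so some $g\in\tau(\mathcal L)$ has $g\mid I_\alpha=f\mid I_\alpha\in V$; hence $g\in U$, a contradiction. Replace your ``rerun'' sentence with this compactness/basic-open-set argument and the proof is complete.
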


\begin{proof}
\indent By the propositions we have proved, in order to see the existence of such joint spectrum, it is enough to consider the set $\mathbb I$ of all complex solvable finite dimensional  ideals of $\mathcal L$, and the presentation of $\mathcal L=\sum_{\alpha\in\mathbb I}I_{\alpha}$. In fact, if $\mathcal M$ is a Lie subalgebra, or ideal, of $\mathcal L$, an easy calculation shows that $\mathcal M=\sum_{\alpha\in \mathbb I}\mathcal M\cap I_{\alpha}$, thus, we may define $\sigma (\mathcal M)$ as 
$$
\sigma (\mathcal M)=\sigma (\mathcal M , (\mathcal M\cap I_{\alpha})_{\alpha\in \mathbb I}).
$$
With this definition, the joint spectrum satisfies properties (i)-(iii). \par

\indent In order to prove that this map is uniquely determined, we proceed as follows.\par

\indent Let us suppose that $\tilde\sigma$ is an assigment which satisfies the previous conditions. Then, by Proposition 3.3, and by conditions (i)-(iii), $\tilde\sigma(\mathcal L)\subseteq \sigma (\mathcal L)$.\par

\indent On the other hand, if $f\in \sigma(\mathcal L)\setminus \tilde\sigma(\mathcal L)$, 
since both joint spectra are compact subsets of $\Pi_{\alpha\in \mathbb I}\sigma (I_{\alpha})$, there is an open set U, which may be chosen in the base of the topology of $\sigma (\mathcal L)$, such that
$f\in U$ and $U\cap \tilde\sigma(\mathcal L)$ is the empty set. Moreover, by [4, Chapter VIII, Section 3, Lemma 3.12] there is $\alpha\in\mathbb I$ and $V$ in the topology of $\sigma (I_{\alpha})$ such that $U=\pi_{\alpha}^{-1}(V)$.\par

\indent Now, by condition (ii) we have that $f\mid 
I_{\alpha}\in \sigma (I_{\alpha} )=\tilde\sigma (\mathcal L)\mid I_{\alpha}$. Thus, by condition (iii), there is $g\in\tilde\sigma (\mathcal L)$ such that $g\mid I_{\alpha}=f\mid I_{\alpha}$. However, since $g\mid I_{\alpha}=f\mid I_{\alpha}$, $g$ belongs to $U$, which is impossible for $g\in\tilde\sigma (\mathcal L)$.
\end{proof}
\indent As we have pointed out, this construction may be applied to the
Taylor and the S\l odkowski joint spectra for complex solvable finite 
dimensional Lie algebras of operators, see [2], [3] and [6], 
and then we extend 
these joint spectra to quasi solvable Lie algebras of operators.
Moreover, this construction gives a non commutative version
of the one developed by Z. S\l odkowski and W. Zelazko in [9],
which, in general, differs from the one considered by D. Beltita
in [1]. In fact, in the solvable finite dimensional case the joint spectrum of [1] does not, in general, coincide
with the one of [2], [3] and [6].\par

\bibliographystyle{amsplain}

\vskip.5cm
Enrico Boasso\par
\noindent E-mail address: enrico\_odisseo@yahoo.it

\end{document}